\newcommand{\blocktheorem}[1]{%
  \csletcs{old#1}{#1}
  \csletcs{endold#1}{end#1}
  \RenewDocumentEnvironment{#1}{o}
    {\par\addvspace{1.5ex}
     \noindent\begin{minipage}{\textwidth}
     \IfNoValueTF{##1}
       {\csuse{old#1}}
       {\csuse{old#1}[##1]}}
    {\csuse{endold#1}
     \end{minipage}
     \par\addvspace{1.5ex}}
}
\newcommand{\Rspace}        	{{\mathbb R}}
\newcommand{\Zspace}        	{{\mathbb Z}}
\newcommand{\mono}		{\hookrightarrow}
\newcommand{\Ccat}          	{{\mathcal{C}}}
\newcommand{\Pcat}          	{{\mathsf{P}}}
\newcommand{\Qcat}          	{{\mathsf{Q}}}
\newcommand{\Vect}          	{{\mathsf{Vec}}}
\newcommand{\Dgm}          	{{\mathsf{Dgm}}}
\newcommand{\Ab}      	    	{{\mathsf{Ab}}}
\newcommand{\Finab}      	    	{{\mathsf{FinAb}}}
\newcommand{\Ggroup}          	{{\mathcal{G}}}
\newcommand{\Ffunc}          	{{\mathsf{F}}}
\newcommand{\Gfunc}          	{{\mathsf{G}}}
\newcommand{\Hfunc}          	{{\mathsf{H}}}
\newcommand{\Kfunc}          	{{\mathsf{K}}}
\newcommand{\image}		{\mathsf{im}}
\newcommand{\id}			{\mathsf{id}}
\newcommand{\field}			{\mathsf{k}}
\newcommand{\ee}			{\varepsilon}
\newcommand\define[1]		{{\bf{#1}}}
\def\moverlay{\mathpalette\mov@rlay}
\def\mov@rlay#1#2{\leavevmode\vtop{%
   \baselineskip\z@skip \lineskiplimit-\maxdimen
   \ialign{\hfil$\m@th#1##$\hfil\cr#2\crcr}}}
\newcommand{\charfusion}[3][\mathord]{
    #1{\ifx#1\mathop\vphantom{#2}\fi
        \mathpalette\mov@rlay{#2\cr#3}
      }
    \ifx#1\mathop\expandafter\displaylimits\fi}
\newtheoremstyle{amit}
{7pt}
{7pt}
{}
{7pt}
{\bf}
{:}
{.5em}
{}
\begin{document}

\theoremstyle{amit}
\newtheorem{defn}{Definition}[section]
\newtheorem{prop}[defn]{Proposition}
\newtheorem{lem}[defn]{Lemma}
\newtheorem{thm}[defn]{Theorem}
\newtheorem{cor}[defn]{Corollary}
\newtheorem{rmk}[defn]{Remark}
\newtheorem{ex}[defn]{Example}

\newcommand\blfootnote[1]{%
  \begingroup
  \renewcommand\thefootnote{}\footnote{#1}%
  \addtocounter{footnote}{-1}%
  \endgroup
}

\title{Bottleneck Stability for Generalized Persistence Diagrams}
\author[1]{Alex McCleary}
\author[1]{Amit Patel}
\affil[1]{Department of Mathematics, Colorado State University}
\date{}

\maketitle

\abstract{In this paper, we extend bottleneck stability to the setting of one dimensional
constructible persistences module valued in any skeletally small abelian category.
}

\section{Introduction}

Persistent homology is a way of quantifying the topology of a function.
Given a function $f : X \to \Rspace$, persistence scans the homology of the sublevel
sets $f^{-1}(-\infty, r]$ as $r$ varies from $-\infty$ to $\infty$.
As it scans, homology appears and homology disappears.
This history of births and deaths is recorded as a \emph{persistence diagram} \cite{CSEdH}
or a \emph{barcode} \cite{ZC2005}.
What makes persistence special is that the persistence diagram of $f$ is stable
to arbitrary perturbations to $f$.
This is the celebrated \emph{bottleneck stability} of Cohen-Steiner, Edelsbrunner, and Harer \cite{CSEdH}.
Bottleneck stability makes persistent homology a useful tool in data analysis and in pure mathematics.
All of this is in the setting of vector spaces where each homology group is computed using coefficients in a field.

Fix a field $\field$ and let $\Vect$ be the category of $\field$-vector spaces.
As persistence scans the sublevel sets of $f$, it records its homology 
as a functor $\Ffunc : (\Rspace, \leq) \to \Vect$ where
$\Ffunc(r) := \Hfunc_\ast \big( f^{-1}(-\infty, r]; \field \big)$ and 
$\Ffunc( r \leq s) : \Ffunc(r) \to \Ffunc(s)$ is the map induced by the inclusion of the
sublevel set at $r$ into the sublevel set at $s$.
The functor $\Ffunc$ is called the \emph{persistence module} of~$f$.
Assuming some tameness conditions on $f$, the persistence diagram of $\Ffunc$ is equivalent to its barcode,
but the two definitions are very different.
The \emph{barcode} of $\Ffunc$ is its list of indecomposables.
This list is unique up to a permutation and furthermore, each indecomposable is an
\emph{interval persistence module} \cite{ZC2005, Carlsson2010, Boevey}.
The barcode model is how most people now think about persistence.
However in \cite{CSEdH} where bottleneck stability was first proved, the persistence diagram
is defined as a purely combinatorial object.
The \emph{rank function} of $\Ffunc$ assigns to each pair of values $r \leq s$
the rank of the map $\Ffunc(r \leq s)$.
The M\"obius inversion of the rank function is the \emph{persistence diagram} of $\Ffunc$.
Remarkably, these two very different approaches to persistence give equivalent answers.

The persistence diagram of \cite{CSEdH} 
easily generalizes \cite{patel} to the setting of constructible persistence modules valued
in any skeletally small abelian category $\Ccat$.
The rank function of such a persistence module records the image
of each $\Ffunc(r \leq s)$ as an element of the Grothendieck group
of $\mathcal{C}$.
Here we are using the Grothendieck group of an abelian category: this is the abelian group
with one generator for each isomorphism class of objects and one relation for each short exact sequence.
The persistence diagram of $\Ffunc$ is then the M\"obius inversion of this rank function.
A weak form of stability was shown in \cite{patel}.
In this paper, we prove bottleneck stability.
Our proof is an adaptation of the proofs of \cite{CSEdH} and~\cite{crazy_persistence}.

We were hoping that the M\"obius inversion model for persistence would lead to a 
good theory of persistence for multiparameter persistence modules 
$\Ffunc : (\Rspace^k, \leq) \to~\Ccat$ \cite{Carlsson2009,Lesnick2015}.
The M\"obius inversion applies to arbitrary finite posets.
Assuming some finiteness conditions on $\Ffunc$, we may define its persistence diagram 
as the M\"obius inversion of its rank function.
The proof of bottleneck stability presented in this paper requires positivity of the persistence diagram;
see Proposition \ref{prop:positivity}.
Unfortunately, there are simple examples of multiparameter persistence modules
whose persistence diagrams are not positive.
Therefore the proof presented here does not generalize to the multiparameter setting.
It seems that the M\"obius inversion model for persistence works well only in the setting of 
one-parameter constructible persistence modules.

%

\section{Persistence Modules}

Fix a skeletally small abelian category $\mathcal{C}$.
By skeletally small, we mean that the collection of isomorphism classes of objects in $\mathcal{C}$
is a set.
For example, $\mathcal{C}$ may be the category of finite dimensional $\mathsf{k}$-vector spaces, 
the category of finite abelian groups, or the category of finite length $R$-modules.
Let $\bar{\Rspace} := \Rspace \cup \{ \infty \}$ be the totally ordered set of real numbers with the point 
$\infty$ satisfying $p < \infty$ for all $p \in \Rspace$.
For any $p \in \bar \Rspace$, we let $\infty + p = \infty$.

\begin{defn}
A \define{persistence module} is a functor $\Ffunc : \Rspace \to \mathcal{C}$.
Let 
$$S=\{s_1 < s_2 < \cdots < s_k < \infty \} \subseteq \bar \Rspace$$ be a finite subset.
A persistence module $\Ffunc$ is \define{$S$-constructible} if it satisfies the following two conditions:
\begin{itemize}
\item For $p \leq q < s_1$, $\Ffunc(p \leq q) : 0 \to 0$ is the zero map.
\item For $s_i \leq p \leq q< s_{i+1}$, $\Ffunc(p \leq q)$ is an isomorphism.
\item For $s_k \leq p \leq q \leq \infty$, $\Ffunc(p \leq q)$ is an isomorphism.
\end{itemize}
\end{defn}

For example, let $f : M \to \Rspace$ be a Morse function on a compact manifold $M$.
The function $f$ filters $M$ by sublevel sets $M_{\leq r}^f := \{ p \in M \; |\; f(p) \leq r \}$.
For every $r \leq s$, $M_{\leq r}^f \subseteq M_{\leq s}^f$.
Now apply homology with coefficients in a finite abelian group.
The result is a persistence module of finite abelian groups that is constructible
with respect to the set of critical values of $f$ union $\{ \infty\}$.
If one applies homology with coefficients in a field $\mathsf{k}$, then the result
is a constructible persistence module of finite dimensional $\mathsf{k}$-vector spaces.
In topological data analysis, one usually starts with a constructible filtration of a finite simplicial complex.

There is a natural distance between persistence modules called the \emph{interleaving
distance} \cite{proximity}.
For any $\ee \geq 0$, let $\Rspace \times_\ee \{ 0, 1\}$ to be the poset 
$\big(  \Rspace \times \{0\} \big) \cup \big(  \Rspace \times\{1\} \big)$
where $(p,t) \leq (q,s)$ if 
\begin{itemize}
\item $t=s$ and $p \leq q$, or
\item $t \neq s$ and $p + \ee \leq q$.
\end{itemize}
Let $\iota_0,\iota_1: \Rspace \hookrightarrow  \Rspace \times_\ee \{ 0, 1\}$ be the poset maps
 $\iota_0: p \mapsto (p,0)$ and $\iota_1: p \mapsto(p,1)$.

\begin{defn}
\label{defn:interleaving}
An \define{$\ee$-interleaving} between two constructible persistence modules 
$\Ffunc$ and $\Gfunc$ is a functor~$\Phi$ that makes the following diagram commute 
up to a natural isomorphism:
\begin{equation}
\label{dgm:interleaving}
\begin{gathered}
\xymatrix{
&  \Rspace \times_\ee \{ 0, 1\} \ar@{-->}[dd]^{\Phi} \\
 \Rspace \ar@{^{(}->}[ru]^{\iota_0} \ar[rd]_{\Ffunc} &&  \Rspace \ar@{^{(}->}[lu]_{\iota_1} \ar[ld]^{\Gfunc} \\
& \mathcal{C}. &
}
\end{gathered}
\end{equation}
Two constructible persistence modules $\Ffunc$ and $\Gfunc$ are \define{$\ee$-interleaved} if there is an 
$\ee$-interleaving between them. 
The \define{interleaving distance} $d_I(\Ffunc,\Gfunc)$ between $\Ffunc$ and $\Gfunc$ is the infimum 
over all $\ee\geq 0$ 
such that $\Ffunc$ and $\Gfunc$ are $\ee$-interleaved. 
This infimum is attained since both $\Ffunc$ and $\Gfunc$ are constructible.
If $\Ffunc$ and $\Gfunc$ are not interleaved, then we let $d_I(\Ffunc,\Gfunc)=\infty$.
\end{defn}

\begin{prop}[Interpolation]
\label{prop:interpolation}
Let $\Ffunc$ and $\Gfunc$ be two $\ee$-interleaved constructible persistence modules.
Then there exists a one-parameter family of constructible persistence modules 
$\{ \Kfunc_t \}_{t \in [0,1]}$ 
such that $\Kfunc_0 \cong \Ffunc$, $\Kfunc_1 \cong \Gfunc$, and 
$d_I ( \Kfunc_t, \Kfunc_s ) \leq \ee |t-s|$.
\end{prop}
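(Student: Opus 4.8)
The plan is to adapt the classical interpolation argument of Cohen--Steiner--Edelsbrunner--Harer to the abelian-category setting, the key point being that the interpolation is built entirely from functorial operations on $\Ccat$ so that nothing about vector spaces is used. Let $\Phi : \Rspace \times_\ee \{0,1\} \to \Ccat$ be the $\ee$-interleaving. For each $t \in [0,1]$ I would define a poset $\Rspace \times_\ee^t \{0,1\}$ that ``slides'' between the two copies of $\Rspace$: keep the underlying set $(\Rspace \times \{0\}) \cup (\Rspace \times \{1\})$, but now declare $(p,0) \leq (q,1)$ when $p + t\ee \leq q$ and $(p,1)\leq(q,0)$ when $p + (1-t)\ee \leq q$ (with the same-level order unchanged). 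One checks this is a poset; for $t=0$ it retracts onto the $0$-copy and for $t=1$ onto the $1$-copy. The first step is to verify that $\Phi$ is still a functor on each of these intermediate posets, i.e. that $\Phi$ respects the new comparabilities. This holds because any comparability in $\Rspace \times_\ee^t\{0,1\}$ with $t\in[0,1]$ refines to a zig-zag of comparabilities present in $\Rspace \times_\ee\{0,1\}$ (going up to level $1$ and back, or vice versa), and $\Phi$ already has values on all those, with the requisite commutativities following from functoriality of $\Phi$ on the original poset together with the interleaving's naturality square.

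Next I would define $\Kfunc_t : \Rspace \to \Ccat$ by restricting $\Phi$ (on the appropriate intermediate poset) along a ``diagonal'' embedding $\Rspace \hookrightarrow \Rspace \times_\ee^t\{0,1\}$ that sends $p$ to $(p,0)$ for $t$ small and interpolates; concretely it is cleanest to send $r \mapsto (r - t\ee,\, 0)$ composed appropriately, or equivalently to reparametrize so that $\Kfunc_t(r) = \Phi(\text{point at height }t\text{ over }r)$. The precise bookkeeping here is the routine part. One then checks that $\Kfunc_0 \cong \Ffunc$ and $\Kfunc_1 \cong \Gfunc$ using the commutativity of~\eqref{dgm:interleaving}, and that each $\Kfunc_t$ is constructible: since $\Ffunc$ and $\Gfunc$ are $S$- and $S'$-constructible, $\Phi$ is constructible with respect to a common finite refinement (shifted by multiples of $\ee$), and restricting to the diagonal only shifts the critical set by $t\ee$, leaving finitely many critical values.

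Finally, for the Lipschitz estimate $d_I(\Kfunc_t,\Kfunc_s) \leq \ee|t-s|$: for $t \leq s$ I would exhibit an explicit $\ee(s-t)$-interleaving between $\Kfunc_t$ and $\Kfunc_s$ by restricting $\Phi$ to a suitable sub-poset of $\Rspace \times_\ee^{?}\{0,1\}$ that connects the height-$t$ diagonal to the height-$s$ diagonal; the interleaving functor and all its naturality isomorphisms are again obtained by composing structure maps of $\Phi$, and the triangle identities reduce to functoriality. The main obstacle I anticipate is purely organizational: setting up the family of posets $\Rspace\times_\ee^t\{0,1\}$ and the diagonal embeddings so that the three claims ($\Kfunc_0\cong\Ffunc$, $\Kfunc_1\cong\Gfunc$, Lipschitz) all fall out uniformly, and making sure every comparability one needs is genuinely present in the original $\ee$-interleaving poset. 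Once the poset combinatorics is pinned down, no categorical input beyond the functoriality of $\Phi$ is required, which is exactly why the abelian-category generalization goes through verbatim.
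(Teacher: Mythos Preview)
Your proposal has a genuine gap at the first step. The posets $\Rspace \times_\ee^t \{0,1\}$ you introduce have \emph{strictly more} arrows than the original $\Rspace \times_\ee \{0,1\}$: for instance when $t<1$ the relation $(p,0)\leq(q,1)$ holds whenever $p+t\ee\leq q$, so in particular for all $q$ with $p+t\ee\leq q<p+\ee$, and no morphism $\Ffunc(p)\to\Gfunc(q)$ is supplied by $\Phi$ for such $q$. Your claim that such a new comparability ``refines to a zig-zag'' in the original poset does not help: a zig-zag with backward arrows does not compose to a morphism, and there is no forward path from $(p,0)$ to $(q,1)$ in $\Rspace\times_\ee\{0,1\}$ when $q<p+\ee$, since every level-changing step costs at least $\ee$. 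Consequently $\Phi$ cannot be regarded as a functor on your intermediate posets, and the subsequent definition of $\Kfunc_t$ as a restriction of $\Phi$ along a ``diagonal embedding'' is undefined. A related symptom: any $\Kfunc_t(r)$ obtained purely by restricting $\Phi$ would have to be an object of the form $\Ffunc(p)$ or $\Gfunc(p)$, whereas the interpolated module must in general take genuinely new values in $\Ccat$. Your phrase ``$\Phi(\text{point at height }t\text{ over }r)$'' hints at the right picture but cannot be made sense of inside a poset whose only heights are $0$ and $1$.

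The paper's fix is precisely to \emph{extend} rather than restrict: it enlarges the indexing poset to $\Rspace\times_\ee[0,1]$, with $(p,t)\leq(q,s)$ iff $p+\ee|t-s|\leq q$, embeds $\Rspace\times_\ee\{0,1\}$ into it via $\iota$, and defines $\Psi$ as the right Kan extension of $\Phi$ along $\iota$. Concretely $\Kfunc_t(p)=\Psi(p,t)$ is the limit of $\Phi$ over the up-set of $(p,t)$, which for $0<t<1$ is governed by the two minimal elements $(p+\ee t,0)$ and $\big(p+\ee(1-t),1\big)$, so $\Kfunc_t(p)$ is essentially a pullback built from $\Ffunc(p+\ee t)$ and $\Gfunc\big(p+\ee(1-t)\big)$. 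This manufactures the missing objects, makes the Lipschitz estimate $d_I(\Kfunc_t,\Kfunc_s)\leq\ee|t-s|$ automatic from the poset relation on $\Rspace\times_\ee[0,1]$, and yields constructibility because the limit changes isomorphism type only when one of those two minimal objects does. To repair your argument you would need to introduce such limits (or an equivalent construction producing new objects of $\Ccat$); the poset-sliding by itself is not enough.
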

\begin{proof}
Let $\Ffunc$ and $\Gfunc$ be $\ee$-interleaved by $\Phi$ as in Definition \ref{defn:interleaving}. 
Define $ \Rspace \times_\ee [0,1]$ as the poset with the underlying set $ \Rspace\times[0,1]$ 
and $(p,t) \leq (q,s)$ whenever $p+\ee |t-s| \leq q$.
Note that $\Rspace \times_\ee \{ 0, 1\}$ naturally embeds into $ \Rspace \times_\ee [0,1]$ via 
$\iota:(p,t) \mapsto (p,t)$.
See Figure~\ref{fig:interpolation}.
Finding $\{\Kfunc_t \}_{t\in [0,1]}$ is equivalent to finding a functor $\Psi$ that makes the following diagram 
commute up to a natural isomorphism:
\begin{center}
\begin{tikzcd}
 \Rspace \times_\ee \{ 0, 1\} \ar[rr,"\Phi"]\ar[d,hook,"\iota"']	&& \Ccat\\
 \Rspace \times_\ee[0,1] . \ar[urr,"\Psi"',dashed]
\end{tikzcd}
\end{center}
This functor $\Psi$ is the right Kan extension of $\Phi$ along $\iota$ for which
we now give an explicit construction.
For convenience, let $\Pcat :=  \Rspace \times_\ee \{ 0, 1\}$ and
$\Qcat :=  \Rspace\times_\ee[0,1]$.
For $(p,t) \in \Qcat$, let $\Pcat \uparrow 
(p,t)$ be the subposet of $\Pcat$ consisting of all elements
$(p', t') \in \Pcat$ such that $(p, t) \leq (p',t')$.
The poset $\Pcat \uparrow (p,t)$, for any $p \in \Rspace$ and $t \notin \{ 0, 1 \}$, has two minimal elements: $(p + \ee t , 0)$ and $\big(p + \ee (1-t), 1 \big)$.
	\begin{figure}
	\centering
	\includegraphics[scale = 0.90]{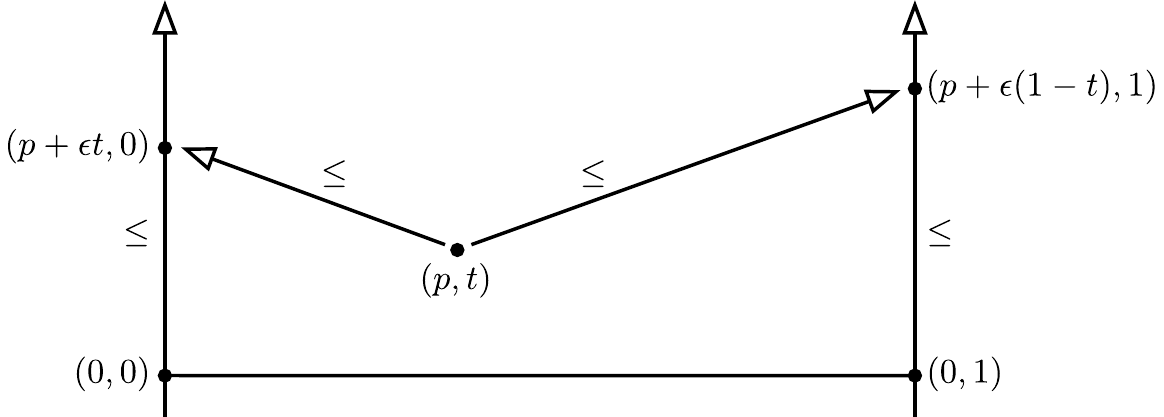}
	\caption{An illustration of the poset relation on $ \Rspace \times_\ee [0,1]$.}
	\label{fig:interpolation}
	\end{figure}
For $t \in \{ 0, 1 \}$, the poset $\Pcat \uparrow (p,t)$ has one minimal element,
namely $(p,t)$.
Let $\Psi \big( (p,t) \big) := \lim \Phi |_{\Pcat \uparrow (p,t)}$.
For $(p,t) \leq (q,s)$, the poset $\Pcat \uparrow (q,s)$ is a subposet of
$\Pcat \uparrow (p,t)$.
This subposet relation allows us to define the morphism $\Psi \big( (p,t) \leq (q,s) \big)$ 
as the universal morphism between the two limits.
Note that $\Psi \big( (p,0) \big)$ is isomorphic
to $\Ffunc(p)$ and $\Psi \big( (p,1) \big)$ is isomorphic to $\Gfunc(p)$.

We now argue that each persistence module $\Kfunc_t := \Psi(\cdot, t)$ is constructible.
As we increase $p$ while keeping $t$ fixed, the limit $\Kfunc_t(p)$ changes only when one of the 
two minimal objects of $\Pcat \uparrow (p,t)$ changes isomorphism type.
Since $\Ffunc$ and $\Gfunc$ are constructible, 
there are only finitely many such changes to the isomorphism type of $\Kfunc_t(p)$.
\end{proof}

\section{Persistence Diagrams}

Fix an abelian group $\Ggroup$ with a translation invariant partial ordering $\preceq$.
That is for all $a, b, c \in \Ggroup$, if $a \leq b$, then $a + c \leq b +c$.
Roughly speaking, a persistence diagram is the assignment to each interval
of the real line an element of $\Ggroup$.
In our setting, only finitely many intervals will have a nonzero value.

\begin{defn}
Let $\Dgm$ be the \define{poset of intervals} consisting of the following data:
	\begin{itemize}
	\item The objects of $\Dgm$ are intervals $[p,q) \subseteq \bar\Rspace$
	where $p \leq q$.
	\item The ordering is inclusion $[p_2,q_2) \subseteq [p_1,q_1)$.
	\end{itemize}
Given a finite set $S=\{s_1< s_2< \cdots <s_k < \infty\}\subseteq\bar{\Rspace}$, we use
$\Dgm(S)$ to denote the subposet of $\Dgm$ consisting of all intervals $[p,q)$ 
with $p,q \in S$. 
The \define{diagonal} $\Delta \subseteq \Dgm$ is the subset of intervals of the form 
$[p,p)$.
See Figure \ref{fig:dgm}.
\end{defn}

\begin{figure}
\begin{center}
\includegraphics[scale = 0.80]{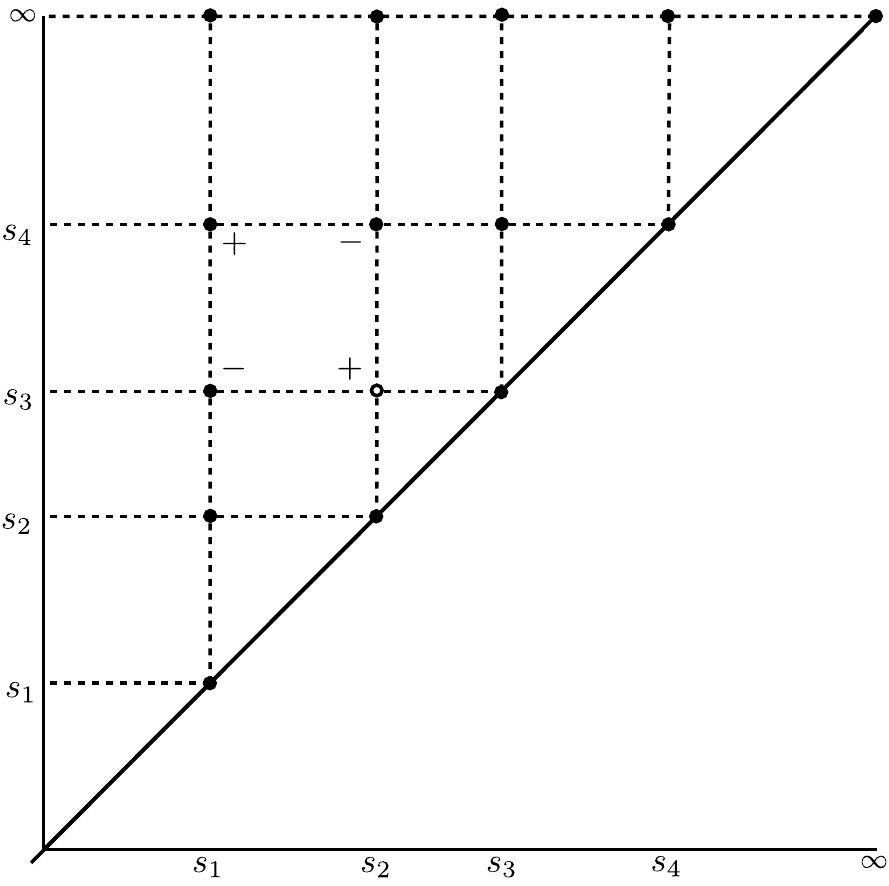}
\end{center}
\caption{An interval $I = [p,q)$ is visualized as the point $(p,q)$ in the plane.
The poset $\Dgm$ is therefore the set of points in the plane
on and above the diagonal.
In this example, $S  = \{ s_1 < s_2 < s_3 < s_4 < \infty\}$ and 
$\Dgm(S)$ is its set of grid points.
Given an $S$-constructible persistence module $\Ffunc$ and an interval $[s_2, s_3)$,
$\tilde{\Ffunc} \big( [s_2, s_3) \big) = d \Ffunc \big( [s_2, s_3) \big) - d \Ffunc \big( [s_2, s_4) \big) + d \Ffunc \big( [s_1, s_4) \big) - d \Ffunc \big( [s_1, s_3) \big)$.}
\label{fig:dgm}
\end{figure}

\begin{defn}
A \define{persistence diagram} is a map
$Y : \Dgm \to \Ggroup$ with finite support.
That is, there are only finitely many intervals $I \in \Dgm$
such that $Y(I) \neq 0$.
\end{defn}

We now introduce the bottleneck distance between persistence diagrams.

\begin{defn}
A \define{matching} between two persistence diagrams $Y_1, Y_2 : \Dgm \to \Ggroup$ is a 
map $\gamma: \Dgm \times \Dgm \to \Ggroup$ 
satisfying 
	\begin{align*}
	Y_1(I) & = \sum_{J \in \Dgm} \gamma(I,J) \textrm{ for all }I \in \Dgm \backslash \Delta \\
	Y_2(J) & = \sum_{I \in \Dgm} \gamma(I,J) \textrm{ for all } J \in \Dgm \backslash \Delta.
	\end{align*}
The \define{norm} of a matching $\gamma$ is
$$ ||\gamma|| := \max_{\big \{I=[p_1,q_1),J=[p_2,q_2) \,\big |\, \gamma(I,J)\neq 0 \big \}} 
\big \{|p_1-p_2|,|q_1-q_2| \big\}. $$
If either $q_1$ or $q_2$ is $\infty$, then $ | q_1 - q_2 | = \infty$.
The \define{bottleneck distance} between two persistence diagrams $Y_1, Y_2 : \Dgm \to \Ggroup$ is 
$$d_B(Y_1,Y_2) := \inf_\gamma ||\gamma||$$ 
over all matchings $\gamma$ between $Y_1$ and $Y_2$.
This infimum is attained since persistence diagrams have finite support.
\end{defn}

\section{Diagram of a Module}
We now describe the construction of a persistence diagram
from a constructible persistence module.
Fix a skeletally small abelian category $\Ccat$.

\begin{defn}
The \define{Grothendieck group} $\Ggroup(\mathcal{C})$ of $\mathcal{C}$ is the abelian group 
with one generator for each 
isomorphism class $[a]$ of objects $a \in \mathsf{ob}\; \mathcal{C}$ and one relation
$[b] = [a] + [c]$
for each short exact sequence
$0 \to a \to b \to c \to 0.$
The Grothendieck group has a natural translation invariant partial ordering where
$[a] \preceq [b]$ whenever $a\hookrightarrow b$.
For each $a\hookrightarrow b$, we have 
$a \oplus c \hookrightarrow b \oplus c$ for any object $c$ in $\mathcal{C}$.
This makes $\preceq$ a translation invariant partial ordering.
\end{defn}

\begin{ex}
Here are three examples of $\mathcal{C}$ with their Grothendieck groups.
	\begin{itemize}
	\item Let $\Vect$ be the category of finite dimensional $\field$-vector spaces
	for some fixed field~$\field$.
	The isomorphism class of a finite dimensional $\field$-vector space is completely determined
	by its dimension. 
	This means that the free abelian group generated by the set of isomorphism classes in 
	$\Vect$ is $\bigoplus_{n} \Zspace$ where $n \geq 0$ is a natural number.
	Since every short exact sequence in $\Vect$ splits, the only relations are of the form $[A] + [B] = [C]$ whenever $A \oplus B \cong C$.
	Therefore $\Ggroup (\Vect) \cong \Zspace$ where the translation invariant partial 
	ordering $\preceq$ is the usual total ordering on the integers.
	
	\item Let $\Finab$ be the category of finite abelian groups.
	A finite abelian group is isomorphic to
		$$\dfrac{\Zspace}{p_1^{n_1} \Zspace} \oplus \cdots \oplus
		 			\dfrac{\Zspace}{p_k^{n_k} \Zspace}$$
	where each $p_i$ is prime.
	The free abelian group generated by the set of isomorphism classes in $\Finab$
	is $\bigoplus_{(p,n)} \Zspace$
	over all pairs $(p, n)$ where $p$ is prime and $n \geq 0$ a natural number.
	Every primary cyclic group $\dfrac{\Zspace}{p^n \Zspace}$ fits into a short exact sequence
		\begin{equation*}
		\begin{tikzcd}
		0 \arrow[r] & \dfrac{\Zspace}{p^{n-1} \Zspace} \arrow[r, "\times p"] & 
		\dfrac{\Zspace}{p^{n} \Zspace} \arrow[r, "/"] &
		\dfrac{\Zspace}{p\Zspace} \arrow[r] & 0
		\end{tikzcd}
		\end{equation*}
	giving rise to a relation
	$\left[ \dfrac{\Zspace}{p^{n} \Zspace} \right] = 
	\left[ \dfrac{\Zspace}{p^{n-1} \Zspace} \right] + \left[ \dfrac{\Zspace}{p \Zspace} \right].$
	By induction, $\left[ \dfrac{\Zspace}{p^{n} \Zspace} \right] = n \left[ \dfrac{\Zspace}{p \Zspace} \right]$.
	Therefore $\Ggroup( \Finab ) \cong \bigoplus_p \Zspace$ where $p$ is prime.
	For two elements $[a], [b] \in \Ggroup( \Finab )$, $[a] \preceq [b]$ 
	if the multiplicity of each prime factor of $[a]$ is at most the multiplicity of each
	prime factor of $[b]$.
	
	\item
	Let $\Ab$ be the category of finitely generated abelian groups.
	A finitely generated abelian group is isomorphic to
		$$ \Zspace^m \oplus \dfrac{\Zspace}{p_1^{n_1} \Zspace} \oplus \cdots \oplus
		 			\dfrac{\Zspace}{p_k^{n_k} \Zspace}$$
	where each $p_i$ is prime.
	The free abelian group generated by the set of isomorphism classes in $\Ab$
	is $\Zspace \oplus \bigoplus_{(p,n)} \Zspace$
	over all pairs $(p, n)$ where $p$ is prime and $n \geq 0$ a natural number.
	In addition to the short exact sequences in $\Finab$, we have
		\begin{equation*}
		\begin{tikzcd}
		0 \arrow[r] & \Zspace \arrow[r, "\times p"] & 
		\Zspace \arrow[r, "/"] &
		\dfrac{\Zspace}{p\Zspace} \arrow[r] & 0
		\end{tikzcd}
		\end{equation*}
	giving rise to the relation $\left[ \dfrac{\Zspace}{p \Zspace} \right] = [0]$.
	Therefore $\Ggroup( \Ab) \cong \Zspace$ where $\preceq$ is the usual total ordering on the integers.
	Unfortunately all torsion is lost.
	\end{itemize}
\end{ex}

Given a constructible persistence module, we now record the images of all its maps
as elements of the Grothendieck group.

\begin{defn}
Let $S = \{s_1 < \cdots < s_k < \infty\}$ be a finite set and $\Ffunc$ an 
$S$-constructible persistence module valued in $\mathcal{C}$.
Choose a $\delta > 0$ such that $s_{i-1} < s_i - \delta$, for all $1< i \leq k$.
The \define{rank function} of $\Ffunc$ is the map
$d \Ffunc : \Dgm \to \Ggroup( \mathcal{C} )$ defined as follows:
$$d \Ffunc (I) =
\begin{cases}
\big[ \image \, \Ffunc(p<s_i-\delta) \big]	& \text{for } I = [p,s_i) \text{ where } 1 \leq i \leq k \\
\big[ \image \, \Ffunc(p \leq q) \big]	& \text{for all other } I=[p,q).
\end{cases}
$$
Note that for any $[p,q) \in \Dgm$, $d \Ffunc \big ( [p,q) \big)$ equals
$d \Ffunc(I)$ where $I$ is the largest interval in $\Dgm(S)$ containing $[p,q)$.
This means that $d \Ffunc$ is uniquely determined by its value on $\Dgm(S)$.
\end{defn}

\begin{prop}
Let $\Ffunc$ be a constructible persistence module valued in a skeletally small abelian category
$\mathcal{C}$.
Then its rank function $d \Ffunc : \Dgm \to \Ggroup(\mathcal{C})$ is a poset
reversing map.
That is for any pair of intervals 
$[p_2, q_2) \subseteq [p_1,q_1)$, 
$d \Ffunc \big( [p_1, q_1) \big) \preceq d \Ffunc \big( [p_2,q_2) \big)$.
\end{prop}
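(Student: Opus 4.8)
The plan is to reduce the proposition to a short computation in $\Ggroup(\Ccat)$ using how images interact with composition in an abelian category, after repackaging the $\delta$-shift in the definition of $d\Ffunc$ conveniently. The categorical fact I would isolate first: for composable morphisms $A \xrightarrow{f} B \xrightarrow{g} C$ in $\Ccat$, the subobject $\image(gf) \hookrightarrow C$ admits a canonical epimorphism from $\image f$ and a canonical monomorphism into $\image g$. Indeed $\image(gf)$ is the image of the restriction $r\colon \image f \hookrightarrow B \xrightarrow{g} C$ (because $gf$ is $r$ precomposed with the epimorphism $A \twoheadrightarrow \image f$, which does not affect images), so the epi--mono factorization of $r$ is $\image f \twoheadrightarrow \image(gf) \hookrightarrow C$; and since $g$, and hence $gf$, factors through $\image g \hookrightarrow C$ while $\image(gf)$ is the least subobject of $C$ through which $gf$ factors, we also get $\image(gf) \hookrightarrow \image g$. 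Passing to the Grothendieck group, the short exact sequence $0 \to K \to \image f \to \image(gf) \to 0$ with $K$ the kernel of the epimorphism gives $[\image f] = [K] + [\image(gf)]$, hence $[\image(gf)] \preceq [\image f]$, and the monomorphism gives $[\image(gf)] \preceq [\image g]$ straight from the definition of $\preceq$. Applying this twice along $A \xrightarrow{f} B \xrightarrow{m} C \xrightarrow{h} D$ yields $[\image(hmf)] \preceq [\image(mf)] \preceq [\image m]$.

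Next I would rewrite $d\Ffunc$ uniformly. For $q \in \bar\Rspace$ let $\Ffunc(q^-)$ be the value of $\Ffunc$ just to the left of $q$ (well defined by constructibility), with $\Ffunc(\infty^-) := \Ffunc(\infty)$, and for $p < q$ let $\phi_{p,q}\colon \Ffunc(p) \to \Ffunc(q^-)$ be the morphism obtained by composing structure maps of $\Ffunc$. The choice of $\delta$ yields $\Ffunc(s_i^-) \cong \Ffunc(s_i-\delta)$ compatibly with the maps out of $\Ffunc(p)$, and $\Ffunc(q^-)\cong\Ffunc(q)$ when $q \notin S$; comparing with the definition, $d\Ffunc([p,q)) = [\image\phi_{p,q}]$ for every non-degenerate interval $[p,q)$. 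Degenerate (diagonal) intervals are handled directly from constructibility, and do not affect matchings in any case.

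Finally, for $[p_2,q_2) \subseteq [p_1,q_1)$, so $p_1 \le p_2 \le q_2 \le q_1$, there is a canonical morphism $\Ffunc(q_2^-) \to \Ffunc(q_1^-)$, and functoriality of $\Ffunc$ makes the square with top row $\phi_{p_1,q_1}$, bottom row $\phi_{p_2,q_2}$, left edge $\Ffunc(p_1 \le p_2)$, right edge $\Ffunc(q_2^-)\to\Ffunc(q_1^-)$ commute; equivalently $\phi_{p_1,q_1} = \big(\Ffunc(q_2^-)\to\Ffunc(q_1^-)\big)\circ\phi_{p_2,q_2}\circ\Ffunc(p_1\le p_2)$. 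The iterated lemma of the first paragraph, with $m=\phi_{p_2,q_2}$, then gives $[\image\phi_{p_1,q_1}] \preceq [\image\phi_{p_2,q_2}]$, that is $d\Ffunc([p_1,q_1)) \preceq d\Ffunc([p_2,q_2))$. I expect the only real friction to be in the second paragraph — verifying that $\Ffunc(q^-)$ is well defined and that $[\image\phi_{p,q}]$ genuinely reproduces the $\delta$-shifted $d\Ffunc$ (a routine use of constructibility); with that settled the remainder is a two-line diagram chase plus the image lemma, using nothing about $\Ccat$ beyond exactness.
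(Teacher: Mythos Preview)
Your proof is correct and follows essentially the same approach as the paper: factor the long map $\Ffunc(p_1)\to\Ffunc(q_1^-)$ through the short map $\Ffunc(p_2)\to\Ffunc(q_2^-)$ and compare images via the canonical epi/mono factorizations in $\Ccat$. The paper's version phrases this as $\image h \cong I/K$ with $K\hookrightarrow I\hookrightarrow \image f$ (where $I=\image(f\circ e)$ and $K=I\cap\ker g$), while you package the same computation as a reusable image lemma and replace the paper's ``assume $q_1,q_2\notin S$'' reduction with your $q^-$ notation; these are presentational differences only.
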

\begin{proof}
Suppose $\Ffunc$ is $S = \{s_1 < \cdots < s_k < \infty\}$-constructible.
Consider the following commutative diagram:
	\begin{equation*}
	\xymatrix{
	\Ffunc(p_1) \ar[d]_{h := \Ffunc (p_1 \leq q_1)} \ar[rrr]^{e := \Ffunc(p_1 \leq p_2)} 
	&&& \Ffunc(p_2) \ar[d]^{f := \Ffunc(p_2 \leq q_2)} \\
	\Ffunc(q_1) &&& \Ffunc(q_2). \ar[lll]^{g := \Ffunc(q_2 \leq q_1)}
	}
	\end{equation*}
We may assume $q_1, q_2 \notin S$.
If this is not the case, replace $q_1$ and/or $q_2$ in the above diagram with 
$q_1 - \delta$ and $q_2 - \delta$ for some sufficiently small $\delta > 0$.
We have $d \Ffunc \big( [p_1, q_1) \big) = [ \image\, h ]$ and
$d \Ffunc \big( [p_2, q_2) \big) =  [ \image\, f ]$.
Let $I := \image\, ( f \circ e )$ and $K := I \cap \ker g$.
Then $K \mono I \mono \image\, f$ and $\image\, h \cong \sfrac{I}{K}$.
Therefore $d \Ffunc \big ( [p_1, q_1) \big) \preceq d \Ffunc \big( [p_2, q_2 ) \big)$.
\end{proof}

Given the rank function $d \Ffunc : \Dgm \to \Ggroup(\mathcal{C})$
of an  $S$-constructible persistence module~$\Ffunc$, there is a unique
map $\tilde \Ffunc : \Dgm \to \Ggroup$ such that
	\begin{equation}
	\label{eq:mobius}
	d \Ffunc (I) =  \sum_{J \in \Dgm: J \supseteq I } \tilde \Ffunc (J)
	\end{equation}
for each $I \in \Dgm$.
This equation is the \emph{M\"obius inversion formula}.
For each $I = [s_i, s_j)$ in $\Dgm(S)$,

	\begin{equation}
	\label{eq:inversion}
	\tilde \Ffunc (I) = d \Ffunc  \big( [s_i, s_j) \big) - d \Ffunc \big( [s_{i}, s_{j+1}) \big)
	+ d \Ffunc \big( [s_{i-1}, s_{j+1}) \big) - d \Ffunc \big( [s_{i-1}, s_j) \big).
	\end{equation}
For each $I = [s_i, \infty)$ in $\Dgm(S)$,
	\begin{equation}
	\label{eq:inversion_infty}
	\tilde \Ffunc (I) = d \Ffunc  \big( [s_i, \infty) \big) - d \Ffunc \big( [s_{i-1}, \infty) \big).
	\end{equation}
For all other $I \in \Dgm$, $\tilde \Ffunc(I) = 0$.
Here we have to be careful with our indices.
It is possible $s_{j+1}$ or $s_{i-1}$ is not in $S$.
If $s_{j+1}$ is not in $S$, let $s_{j+1} = \infty$.
If $s_{i-1}$ is not in $S$, let $s_{i-1}$ be any value strictly less than $s_1$.
We call $\tilde \Ffunc$ the \emph{M\"obius inversion} of $d \Ffunc$.

\begin{defn}
\label{defn:diagram}
The \define{persistence diagram of a constructible persistence module} $\Ffunc$
is the M\"obius inversion $\tilde \Ffunc$ of its rank function $d \Ffunc$.
\end{defn}

The Grothendieck group of $\Ccat$ has one relation for each short exact sequence in $\Ccat$.
These relations ensure that the persistence diagram of a persistence module is positive which
plays a key role in the proof of Lemma \ref{lem:box}.

\begin{prop}[Positivity \cite{patel}]
\label{prop:positivity} 
Let $\Ffunc$ be a constructible persistence module 
valued in a skeletally small abelian category $\Ccat$.
Then for any $I\in \Dgm$, we have~$[0] \preceq \tilde \Ffunc(I)$.
\end{prop}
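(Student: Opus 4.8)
The plan is to prove positivity by showing that each value $\tilde\Ffunc(I)$, as given by the M\"obius inversion formulas \eqref{eq:inversion} and \eqref{eq:inversion_infty}, is the class $[a]$ of an honest object $a \in \mathcal{C}$ rather than a mere formal difference in $\Ggroup(\mathcal{C})$. Since $[0]\preceq[a]$ holds for every object $a$ (as $0\hookrightarrow a$), this suffices. Concretely, for $I=[s_i,s_j)$ the formula expresses $\tilde\Ffunc(I)$ as a signed sum of four rank values attached to the corners of the unit box $[s_{i-1},s_i]\times[s_j,s_{j+1}]$, so the task is to identify this alternating sum with the class of a concrete subquotient built out of images and kernels of the structure maps of $\Ffunc$.

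The key step is a local analysis of the four-corner box. Fix indices and (using the $\delta$-shifting convention to ensure all right endpoints lie outside $S$, exactly as in the proof of the preceding proposition) consider the commutative square of structure maps relating $\Ffunc$ at $s_{i-1},s_i$ on the source side and $s_j - \delta, s_{j+1}-\delta$ on the target side. Write $A = \Ffunc(s_{i-1}\le s_j-\delta)$-image, and similarly name the images $B,C,D$ at the other three corners; the M\"obius value is $[B]-[C]+[A]-[D]$ in the ordering I would arrange so that the inclusions $D\supseteq A$, $D\supseteq B$-type containments hold inside a common ambient space $\Ffunc(s_{i-1}\le s_{j+1}-\delta)$. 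The claim to establish is that there is a natural object $M_I\subseteq \Ffunc(s_i)$ — morally the "part born at $s_i$ and dying at $s_j$" — fitting into two short exact sequences whose Grothendieck relations collapse to exactly $[M_I]=[B]-[C]+[A]-[D]$. This is the abelian-category analogue of the rank-inclusion-exclusion identity that produces multiplicities in the classical vector-space case; the short exact sequences come from the second isomorphism theorem applied to intersections of images with kernels, just as in the proof above where $\image\, h \cong I/K$ was obtained. For $I=[s_i,\infty)$ the argument is the same but shorter, using only the two-term formula \eqref{eq:inversion_infty} and the single short exact sequence $0\to \ker g' \to \Ffunc(s_{i-1}\le\infty)\text{-image}\to \cdots$ relating the two images; and for $I\notin\Dgm(S)$ there is nothing to prove since $\tilde\Ffunc(I)=0=[0]$.

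I expect the main obstacle to be bookkeeping: choosing the intersections and quotients so that the four (or two) terms assemble into a single class with all signs correct, and verifying that the requisite sequences are genuinely short exact in $\mathcal{C}$ (not just exact in some localization). The diagram-chase is elementary in each abelian category by the embedding theorem, but one must be careful that the construction of $M_I$ is independent of the auxiliary $\delta$ and respects the conventions for $s_{i-1}$ and $s_{j+1}$ when those fall outside $S$. Since this is precisely the positivity result of \cite{patel}, I would cite that paper for the detailed verification and here only record the identification $\tilde\Ffunc(I)=[M_I]$ together with the two short exact sequences exhibiting it.
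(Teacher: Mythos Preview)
The paper does not prove this proposition at all: it is stated with a citation to \cite{patel} and no argument is given. Your proposal goes further than the paper by sketching the underlying idea, and the sketch is essentially correct. The identification you are after can be made precise as follows: with $f=\Ffunc(s_{i-1}\le s_i)$, $g=\Ffunc(s_i\le s_j-\delta)$, $h=\Ffunc(s_j-\delta\le s_{j+1}-\delta)$, the two short exact sequences
\[
0\to \ker h\cap\image g\to \image g\to \image(hg)\to 0,
\qquad
0\to \ker h\cap\image(gf)\to \image(gf)\to \image(hgf)\to 0
\]
give $\tilde\Ffunc(I)=[\ker h\cap\image g]-[\ker h\cap\image(gf)]$, and since $\image(gf)\subseteq\image g$ one obtains $\tilde\Ffunc(I)=\bigl[(\ker h\cap\image g)/(\ker h\cap\image(gf))\bigr]$, an honest object of $\Ccat$. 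Your description of the four images $A,B,C,D$ and their containments is slightly garbled (they do not all sit inside a single ambient image), but the strategy --- pair off the four terms into two differences via short exact sequences, then take a further quotient --- is exactly right, and your closing remark that one should ultimately cite \cite{patel} matches what the paper itself does.
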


\section{Stability}
We now begin the task of proving bottleneck stability.
Throughout this section, persistence modules are valued in a fixed skeletally small abelian
category $\Ccat$.

\begin{defn}
For an interval $I = [p,q)$ in $\Dgm$ and a value $\ee \geq 0$, let
$$ \square_{\ee} I :=  \big\{ [r,s) \in \Dgm \, \big |\, p-\ee < r \leq p+\ee \text{ and } q-\ee \leq s < q+ \ee \big \} $$ 
be the subposet of $\Dgm$ consisting of intervals $\ee$-close to $I$.
If $I$ is too close to the diagonal, that is if $q - \ee \leq p+\ee$, then we let
$\square_\ee I$ be empty.
We call $\square_\ee I$ the $\ee$-\emph{box} around $I$.
See Figure \ref{fig:integration}.
Note that if $q = \infty$, then $\square_\ee I = \big \{ [r, \infty) \; \big | \;  p-\ee < r \leq p+ \ee \big \}$.
\end{defn}

	\begin{figure}
	\begin{center}
	\includegraphics[scale = 0.80]{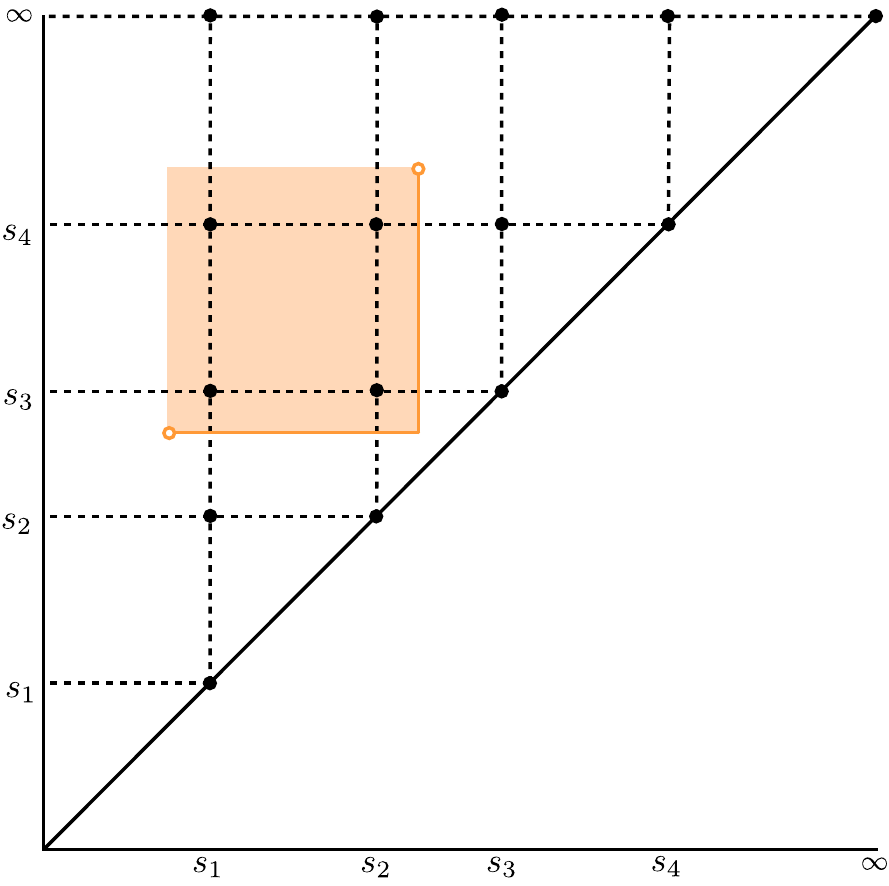}
	\end{center}
	\caption{The shaded area is a box $\square_\ee I$.
	Note that $\square_\ee I$ is closed on the bottom and right, and it is open on the top and left.
	}
	\label{fig:integration}
	\end{figure}

\begin{lem}
\label{lem:corners}
Let $\Ffunc$ be an $S$-constructible persistence module, $I = [p,q)$, and $\ee > 0$.
If $\square_\ee I$ is nonempty, then
	\begin{equation*}
	\label{eq:sum}
	\sum_{J \in \square_\ee I} \tilde{F}(J) = d \Ffunc \big( [p+\ee, q-\ee) \big) - 
	d \Ffunc \big( [p+\ee, q+\ee) \big) 
	+ d \Ffunc \big( [p-\ee, q+\ee) \big) - d \Ffunc \big( [p-\ee, q-\ee) \big)
	\end{equation*}
whenever $q \neq \infty$ and
	\begin{equation*}
	\label{eq:sum}
	\sum_{J \in \square_\ee I} \tilde{F}(J) = d \Ffunc \big( [p+\ee, \infty) \big) 
	- d \Ffunc \big( [p-\ee, \infty) \big)
	\end{equation*}
whenever $q = \infty$.
\end{lem}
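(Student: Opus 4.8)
The plan is to express the box $\square_\ee I$, regarded as a subset of $\Dgm$, by an inclusion--exclusion over four principal up-sets, and then to sum the M\"obius inversion $\tilde\Ffunc$ against the resulting identity of characteristic functions. For $K \in \Dgm$ write $Q(K) := \{\, J \in \Dgm \mid J \supseteq K \,\}$ for the principal up-set at $K$; with this notation the M\"obius inversion formula \eqref{eq:mobius} reads $d\Ffunc(K) = \sum_{J \in Q(K)} \tilde\Ffunc(J)$. Since $\tilde\Ffunc$ vanishes outside the finite set $\Dgm(S)$, every sum of $\tilde\Ffunc$ over a subposet of $\Dgm$ below is finite and may be rearranged freely.

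Assume first that $q \neq \infty$. Nonemptiness of $\square_\ee I$ forces $p + \ee < q - \ee$, which together with $p \leq q$ guarantees that the four corner intervals $[p+\ee,q-\ee)$, $[p+\ee,q+\ee)$, $[p-\ee,q+\ee)$, $[p-\ee,q-\ee)$ all lie in $\Dgm$ --- this is exactly where the nonemptiness hypothesis is used. An interval $[r,s) \in \Dgm$ lies in $\square_\ee I$ precisely when $p-\ee < r \leq p+\ee$ and $q-\ee \leq s < q+\ee$. Factoring this condition coordinate-wise, writing the left-endpoint part as $\mathbf 1[r \leq p+\ee] - \mathbf 1[r \leq p-\ee]$ and the right-endpoint part as $\mathbf 1[s \geq q-\ee] - \mathbf 1[s \geq q+\ee]$, and using that $\mathbf 1[r \leq a]\,\mathbf 1[s \geq b]$ is the characteristic function of $Q([a,b))$, one obtains the identity of characteristic functions on $\Dgm$
\begin{equation*}
\mathbf 1_{\square_\ee I} = \mathbf 1_{Q([p+\ee,\,q-\ee))} - \mathbf 1_{Q([p+\ee,\,q+\ee))} + \mathbf 1_{Q([p-\ee,\,q+\ee))} - \mathbf 1_{Q([p-\ee,\,q-\ee))}.
\end{equation*}
Summing $\tilde\Ffunc$ against both sides and applying $d\Ffunc(K) = \sum_{J \in Q(K)} \tilde\Ffunc(J)$ to each of the four terms on the right yields the first asserted formula.

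When $q = \infty$ every interval of $\square_\ee I$ has right endpoint $\infty$, so only the left-endpoint condition survives and the identity above collapses to the two-term version $\mathbf 1_{\square_\ee I} = \mathbf 1_{Q([p+\ee,\infty))} - \mathbf 1_{Q([p-\ee,\infty))}$; summing $\tilde\Ffunc$ against it and invoking the M\"obius formula once more gives the second asserted formula. I expect the only point requiring care to be the coordinate-wise factorization of the half-open box: one must keep track that $\square_\ee I$ is closed along its bottom and right edges but open along the top and left, so that it matches the one-sided shape of each $Q([a,b))$ and the four signs come out as stated. It is also worth stressing --- and this is why the lemma separates the two cases --- that the $q = \infty$ formula is genuinely different and is \emph{not} obtained by formally substituting $q = \infty$ into the first one, which would make it collapse to $0$.
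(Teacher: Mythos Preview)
Your proof is correct and follows essentially the same approach as the paper: both express $\square_\ee I$ via inclusion--exclusion over the four principal up-sets $\{J \supseteq K\}$ at the corner intervals, then invoke the M\"obius inversion formula \eqref{eq:mobius} to turn each sum into a value of $d\Ffunc$. Your characteristic-function formalism makes the inclusion--exclusion step more explicit than the paper's terse one-line decomposition, but the argument is the same.
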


\begin{proof}
Both equalities follow easily from the M\"obius inversion formula; see Equation~\ref{eq:mobius}.
If $q \neq \infty$, then
	\begin{align*}
	\sum_{J \in \square_\ee I} \tilde \Ffunc(J) &= \sum_{\substack{J \in \Dgm: \\ J \supseteq [p+\ee,q-\ee)}} 
	\tilde \Ffunc(J)
	- \sum_{\substack{J \in \Dgm: \\ J \supseteq [p+\ee, q+\ee)}} \tilde \Ffunc(J) +
	\sum_{\substack{J \in \Dgm: \\ J \supseteq [p-\ee, q+\ee)}} \tilde \Ffunc(J) - 
	\sum_{\substack{J \in \Dgm: \\ J \supseteq [p-\ee, q-\ee)} } \tilde \Ffunc(J) \\
	& = d \Ffunc \big( [p+\ee,q-\ee) \big) - d \Ffunc \big( [p+\ee,q+\ee) \big) - 
	d \Ffunc \big( [p-\ee,q+\ee) \big) + d \Ffunc \big( [p-\ee,q-\ee) \big).
	\end{align*}
If $q = \infty$, then
	\begin{align*}
	\sum_{J \in \square_\ee I} \tilde \Ffunc(J) &= \sum_{\substack{J \in \Dgm: \\ J 
	\supseteq [p+\ee,\infty)}} \tilde \Ffunc(J)
	- \sum_{\substack{J \in \Dgm: \\ J \supseteq [p-\ee, \infty)}} \tilde \Ffunc(J) \\
	& = d \Ffunc \big( [p+\ee,\infty) \big) - d \Ffunc \big( [p-\ee,\infty) \big).
	\end{align*}
\end{proof}


\begin{lem}[Box Lemma]
\label{lem:box}
Let $\Ffunc$ and $\Gfunc$ be two $\ee$-interleaved constructible persistence modules, $I \in \Dgm$, 
and $\mu > 0$.
Then 
$$\sum_{J \in \square_{\mu} I} \tilde \Ffunc(J) \preceq \sum_{J \in \square_{\mu + \ee} I} \tilde \Gfunc(J)$$
whenever $\square_{\mu + \ee} I$ is nonempty.
\end{lem}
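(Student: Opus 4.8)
By Lemma~\ref{lem:corners}, both sides of the claimed inequality are expressible as alternating sums of rank-function values at the four corners of the relevant boxes. Specifically, writing $I = [p,q)$ with $q \neq \infty$ (the case $q = \infty$ being analogous with only two corner terms), the left side equals
\[
d\Ffunc\big([p+\mu, q-\mu)\big) - d\Ffunc\big([p+\mu, q+\mu)\big) + d\Ffunc\big([p-\mu, q+\mu)\big) - d\Ffunc\big([p-\mu, q-\mu)\big),
\]
and the right side is the same expression with $\Ffunc \rightsquigarrow \Gfunc$ and $\mu \rightsquigarrow \mu+\ee$. So the task reduces to a statement purely about the images of the transition maps of $\Ffunc$ and $\Gfunc$, connected via the $\ee$-interleaving $\Phi$.

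**Key steps.** First I would reduce, via Proposition~\ref{prop:interpolation} (Interpolation), to the case where $\ee$ is arbitrarily small: if the Box Lemma holds for all $\ee' \leq \ee_0$ for some small $\ee_0$, then chaining the interleaved interpolants $\Kfunc_{t_0}, \Kfunc_{t_1}, \dots$ along a fine partition of $[0,1]$ and transitivity of $\preceq$ (using translation-invariance to absorb the telescoping shifts of the box parameter) yields the general statement. This is the standard ``it suffices to prove it for small $\ee$'' move. Second, with $\ee$ small, I would unwind the interleaving: the $\ee$-interleaving gives maps $\Ffunc(p) \to \Gfunc(p+\ee)$ and $\Gfunc(p) \to \Ffunc(p+\ee)$ compatible with all transition maps and composing to the internal maps over $2\ee$. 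The four rank-function corner terms for $\Ffunc$ with parameter $\mu$ sit ``inside'' the four corner terms for $\Gfunc$ with parameter $\mu + \ee$ — e.g. $[p+\mu+\ee, q-\mu-\ee) \subseteq [p+\mu, q-\mu)$ pulls back appropriately. Third, and this is the crux, I would show the alternating sum of $\image$'s on the $\Ffunc$ side injects into the alternating sum on the $\Gfunc$ side inside the Grothendieck group. The honest way to do this is to reinterpret each alternating corner sum as $\sum_{J \in \square_\mu I} \tilde\Ffunc(J)$, invoke Positivity (Proposition~\ref{prop:positivity}) to know each $\tilde\Ffunc(J) \succeq [0]$, and then exhibit an explicit injection realizing the difference: build, from the interleaving maps, a short exact sequence (or a monomorphism) whose cokernel is the ``extra'' part on the $\Gfunc$ side, so that $\sum_{\square_\mu I}\tilde\Ffunc(J) + (\text{something} \succeq 0) = \sum_{\square_{\mu+\ee}I}\tilde\Gfunc(J)$ in $\Ggroup(\Ccat)$.

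**Main obstacle.** The hard part is Step three: producing the injection at the level of objects of $\Ccat$ whose class witnesses $\preceq$. In the vector-space case one just compares ranks (dimensions) and the inequality is a numeric one about $\mathrm{rank}(gf) \leq \mathrm{rank}(f)$ type bounds assembled over a commuting grid; here we need an actual subobject relation, not merely an inequality of classes, because $\preceq$ on $\Ggroup(\Ccat)$ is defined by $[a] \preceq [b] \iff a \hookrightarrow b$ and this relation is strictly finer than any numeric invariant. The positivity of the individual $\tilde\Ffunc(J)$ is what makes the assembled alternating sum an honest class of an object (not a formal difference), and one must carefully track, across the interleaving square
\[
\xymatrix{
\Ffunc(p+\mu+\ee) \ar[r] \ar[d] & \Gfunc(p+\mu+2\ee) \ar[d] \\
\Gfunc(q-\mu-2\ee) \ar[r] & \Ffunc(q-\mu-\ee),
}
\]
how the image subobjects nest. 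I expect to spend the bulk of the proof constructing this nesting explicitly — likely via a diagram chase producing, for each pair of nested corner intervals, a monomorphism of images, and then arguing these are compatible enough to sum. The reduction steps (small $\ee$, the two cases $q=\infty$ vs $q\neq\infty$) are routine bookkeeping by comparison.
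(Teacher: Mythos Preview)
Your plan has a genuine gap: you are missing the device that makes the paper's argument go through. The paper does \emph{not} reduce to small $\ee$ via interpolation (that reduction is valid but buys nothing here---the difficulty you flag in Step three is identical for small and large $\ee$), nor does it attempt the direct diagram chase on image subobjects you sketch. Instead, it constructs an \emph{auxiliary constructible persistence module} $\Hfunc$ by ``unrolling'' the interleaving square into a single $\Rspace$-indexed chain: one places the eight objects $\Gfunc(p-\mu-\ee)$, $\Ffunc(p-\mu)$, $\Ffunc(p+\mu)$, $\Gfunc(p+\mu+\ee)$, $\Gfunc(q-\mu-\ee-\delta)$, $\Ffunc(q-\mu-\delta)$, $\Ffunc(q+\mu-\delta)$, $\Gfunc(q+\mu+\ee-\delta)$ at consecutive points of a fresh parameter set $T$, with the structure maps supplied by the commuting interleaving diagram. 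The payoff is that the rank function of $\Hfunc$ reproduces \emph{both} sets of corner values: the four $\Ffunc$-corners are the values of $d\Hfunc$ at the corners of a box $\square_\mu[a,b)$, and the four $\Gfunc$-corners are the values of $d\Hfunc$ at the corners of the strictly larger box $\square_{\mu+\ee}[a,b)$. By Lemma~\ref{lem:corners} both sides of the claimed inequality are now sums of $\tilde\Hfunc$ over \emph{nested} boxes in the \emph{same} persistence diagram, and Positivity (Proposition~\ref{prop:positivity}) together with translation-invariance of $\preceq$ finishes the proof in one line: the larger sum is the smaller sum plus a sum of terms each $\succeq[0]$.

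Your Step three, by contrast, asks for an explicit monomorphism between objects whose Grothendieck classes are the two alternating corner-sums. You never identify what those objects are, and the alternating signs make this genuinely opaque: positivity tells you each $\tilde\Ffunc(J)$ is the class of \emph{some} object, but it does not hand you a canonical realization of the box-sum, much less a monomorphism into the corresponding object on the $\Gfunc$ side built from a different persistence module. The auxiliary-module trick is precisely what converts the comparison of two unrelated alternating sums into an inclusion of index sets for a single positive function $\tilde\Hfunc$. Without it you are stuck at the obstacle you yourself name, and nothing in your outline---neither the interpolation reduction nor the sketched square (whose shifts $\ee$, $2\ee$ do not match the corners actually in play)---moves you past it.
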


\begin{proof}
Suppose $\Ffunc$ and $\Gfunc$ are
$\ee$-interleaved by $\Phi$ in Diagram \ref{dgm:interleaving}. 
Define $\varphi_{r}: \Ffunc(r) \to \Gfunc(r+\ee)$ as 
$\Phi \big( (r,0) \leq (r+\ee,1) \big)$ and define 
$\psi_{r}: \Gfunc (r) \to \Ffunc (r+\ee)$ as $\Phi \big( (r,1) \leq (r+\ee,0) \big)$.

Suppose $I = [p,q)$ where $q \neq \infty$. By Lemma \ref{lem:corners}, 
\begin{align*}
\sum_{J \in \square_{\mu} I } \tilde{F}(J) =\; & 
	d \Ffunc \big( [p+\mu, q-\mu) \big) - d \Ffunc \big( [p+\mu, q+\mu) \big) \\
	&+ d \Ffunc \big( [p-\mu, q+\mu) \big) - d \Ffunc \big( [p-\mu, q-\mu) \big) \\
\sum_{J \in \square_{\mu+\ee} I} \tilde{G}(J) =\; & 
	d \Gfunc \big( [p+\mu+\ee, q-\mu-\ee) \big) - d \Gfunc \big( [p+\mu+\ee, q+\mu+\ee) \big) \\
	& + d \Gfunc \big( [p-\mu-\ee, q+\mu+\ee) \big) - d \Gfunc \big( [p-\mu-\ee, q-\mu-\ee) \big).
\end{align*}
Choose a sufficiently small $\delta > 0$ so that we have the following equalities:
	\begin{align*}
d \Ffunc \big( [p+\mu, q-\mu) \big) &= \big[ \image\; \Ffunc ( p + \mu < q-\mu-\delta) \big] \\  
d \Ffunc \big( [p+\mu, q+\mu) \big) &= \big[ \image\; \Ffunc ( p+\mu < q+\mu-\delta) \big] \\
d \Ffunc \big( [p-\mu, q+\mu) \big) &= \big[ \image\; \Ffunc ( p-\mu < q+\mu-\delta) \big] \\
d \Ffunc \big( [p-\mu, q-\mu) \big) &= \big[ \image\; \Ffunc ( p-\mu < q-\mu-\delta) \big] \\
d \Gfunc \big( [p+\mu+\ee, q-\mu-\ee) \big) &= \big[ \image\; \Gfunc ( p+\mu+\ee < q-\mu-\ee-\delta) \big] \\
d \Gfunc \big( [p+\mu+\ee, q+\mu+\ee) \big) &= \big[ \image\; \Gfunc ( p+\mu+\ee < q+\mu+\ee-\delta) \big]\\
d \Gfunc \big( [p-\mu-\ee, q+\mu+\ee) \big) &= \big[ \image\; \Gfunc ( p-\mu-\ee < q+\mu+\ee-\delta) \big] \\
d \Gfunc \big( [p-\mu-\ee, q-\mu-\ee) \big) &= \big[ \image\; \Ffunc ( p-\mu-\ee < q-\mu-\ee-\delta) \big].
	\end{align*}
Consider the following commutative diagram where the horizontal and vertical arrows are the appropriate 
morphisms from $\Ffunc$ and $\Gfunc$:
\begin{center}
\begin{tikzcd}
\Gfunc(p-\mu-\ee) \ar[rrrr] \ar[dddd] \ar[dr,"\psi_{p-\mu -\ee}"]
&&&& G(p+\mu+\ee) \ar[dddd] \\
& \Ffunc (p-\mu) \ar[rr] \ar[dd]&& \Ffunc(p+\mu) \ar[dd] \ar[ur,"\varphi_{p+\mu}"]\\
\\
  &\Ffunc(q+\mu-\delta) \ar[dl,"\varphi_{q+\mu-\delta}"]&& \Ffunc(q-\mu-\delta) \ar[ll] \\
\Gfunc(q+\mu+\ee-\delta)	&&&& \Gfunc(q-\mu-\ee-\delta). \ar[llll] \ar[ul,"\psi_{q-\mu-\ee-\delta}"]
\end{tikzcd}
\end{center}
Choose two values $a < b$ such that $a+ \mu+\ee < b-\mu-\ee$ and let
$$T := \{ a-\mu-\ee < a-\mu < a+\mu < a+\mu+\ee < c < b-\mu-\ee < b-\mu < b+\mu < \infty \} 
\subseteq \bar \Rspace.$$
Let $\Hfunc: \Rspace \to \mathcal{C}$ be the $T$-constructible persistence module determined by the following diagram:
\begin{equation*}
\xymatrix{
\Hfunc(a-\mu-\ee) = \Gfunc(p-\mu-\ee) \ar[r] & \Hfunc(a-\mu) = \Ffunc(p-\mu) \ar[r] & 
\Hfunc(a+\mu) = \Ffunc(p+\mu) \ar[d] \\
\Hfunc(b-\mu-\ee) = \Ffunc(q-\mu-\delta) \ar[d] & \Hfunc(c) = \Gfunc (q-\mu-\ee-\delta) \ar[l] &  
\Hfunc(a+\mu+\ee) = \Gfunc(p+\mu+\ee) \ar[l] \\
\Hfunc(b-\mu) = \Ffunc(q+\mu-\delta) \ar[r] & \Hfunc(b+\mu) = \Gfunc(q+\mu+\ee-\delta).
}
\end{equation*}
Here the value of $\Hfunc$ is given on each value in $T$ and morphisms between adjacent objects are the connecting
morphisms in the above commutative diagram.
For example, for all $a+\mu+\ee \leq r < c$, $\Hfunc(r) = \Gfunc(p+\mu+\ee)$ and 
$\Hfunc(a+\ee+\mu \leq r) = \id$.
The morphism $\Hfunc(c \leq b-\mu-\ee)$ is $\psi_{q-\mu-\ee-\delta}$.
We have the following equalities:
\begin{align*}
\big[ \image\; \Ffunc ( p + \mu < q-\mu-\delta) \big] &= d \Hfunc \big( [a+\mu, b-\mu) \big) \\
\big[ \image\; \Ffunc ( p + \mu < q+\mu-\delta) \big] &= d \Hfunc \big( [a+\mu, b+\mu) \big)  \\
\big[ \image\; \Ffunc ( p - \mu < q+\mu-\delta) \big] &= d \Hfunc \big( [a-\mu, b+\mu) \big) \\
\big[ \image\; \Ffunc ( p - \mu < q-\mu-\delta) \big] &= d \Hfunc \big( [a-\mu, b-\mu) \big) \\
\big[ \image\; \Gfunc ( p+\mu+\ee < q-\mu-\ee-\delta) \big] &= d \Hfunc \big( [a+\mu+\ee, b-\mu-\ee) \big) \\ 
\big[ \image\; \Gfunc ( p+\mu+\ee < q+\mu+\ee-\delta) \big] &= 
	d \Hfunc \big( [a+\mu+\ee, b+\mu+\ee) \big) \\
\big[ \image\; \Gfunc ( p-\mu-\ee < q+\mu+\ee-\delta) \big] &= d \Hfunc \big( [a-\mu-\ee, b+\mu+\ee) \big) \\
\big[ \image\; \Gfunc ( p-\mu-\ee < q-\mu-\ee-\delta) \big] &= d \Hfunc \big( [a-\mu-\ee, b-\mu-\ee) \big).
\end{align*}
By Lemma \ref{lem:corners} along with the above substitutions, we have
$$\sum_{J \in \square_{\mu}[a,b)}  \tilde \Hfunc (J) = \sum_{J \in \square_{\mu} I} \tilde{\Ffunc}(J)$$
$$\sum_{J \in \square_{\mu+\ee}[a,b)} \tilde \Hfunc (J) = \sum_{J \in \square_{\mu+\ee} I } \tilde{G}(J).$$ 
By the inclusion $\square_{\mu} [a,b) \subseteq \square_{\mu + \ee} [a,b) $ along with Proposition \ref{prop:positivity}, we have 
$$\sum_{J \in \square_{\mu} [a,b)} \tilde \Hfunc (J) \preceq 
\sum_{J \in \square_{\mu+\ee} [a,b)} \tilde \Hfunc (J).$$
This proves the statement.

Suppose $I = [p,\infty)$ and let $z \in \Rspace$ be larger than $\mu + \ee$ and $s_k$ where $\Ffunc$ and $\Gfunc$ are $\{ s_1 < \cdots < s_k < \infty \}$-constructible.
By Lemma \ref{lem:corners}, 
\begin{align*}
\sum_{J \in \square_{\mu} I } \tilde{F}(J) =\; & 
	d \Ffunc \big( [p+\mu, \infty) \big) - d \Ffunc \big( [p-\mu, \infty) \big) \\
\sum_{J \in \square_{\mu+\ee} I} \tilde{G}(J) =\; & 
	d \Gfunc \big( [p+\mu+\ee, \infty) \big) - d \Gfunc \big( [p-\mu-\ee, \infty) \big).
\end{align*}
We have the following equalities:
	\begin{align*}
d \Ffunc \big( [p+\mu, \infty) \big) &= \big[ \image\; \Ffunc ( p+\mu < \infty) \big] \\  
d \Ffunc \big( [p-\mu, \infty) \big) &= \big[ \image\; \Ffunc ( p-\mu < \infty) \big] \\
d \Gfunc \big( [p+\mu+\ee, \infty) \big) &= \big[ \image\; \Gfunc ( p+\mu+\ee < \infty) \big] \\
d \Gfunc \big( [p-\mu-\ee, \infty) \big) &= \big[ \image\; \Gfunc ( p-\mu-\ee < \infty) \big].
	\end{align*}
Consider the following commutative diagram where the vertical and horizontal arrows are the appropriate
morphisms from $\Ffunc$ and $\Gfunc$:
\begin{center}
\begin{tikzcd}
\Gfunc(p-\mu-\ee) \ar[rrrr] \ar[dddd]  \ar[dr,"\psi_{p-\mu-\ee}"] &&&& \Gfunc(p+\mu+\ee) \ar[dddd] \\
& \Ffunc (p-\mu) \ar[rr] \ar[dd] && \Ffunc(p+\mu) \ar[dd] \ar[ur,"\varphi_{p+\mu}"] & \\
&&&& \\
& \Ffunc(\infty) \ar[dl,"\varphi_{\infty}"]&& \Ffunc(\infty) \ar[ll]  & \\
\Gfunc(\infty) &&&& \Gfunc(\infty). \ar[llll]  \ar[ul,"\psi_{\infty}"]
\end{tikzcd}
\end{center}
Note that $\psi_\infty$ and $\varphi_\infty$ are isomorphisms.
Let
$$T := \{ -\mu-\ee < -\mu < \mu < \mu+\ee < \infty \} 
\subseteq \bar \Rspace.$$
Let $\Hfunc: \Rspace \to \mathcal{C}$ be the $T$-constructible persistence module determined by the following diagram:
\begin{equation*}
\xymatrix{
\Hfunc(-\mu-\ee) = \Gfunc(p-\mu-\ee) \ar[r] & \Hfunc(-\mu) = \Ffunc(p-\mu) \ar[r] & 
\Hfunc(\mu) = \Ffunc(p+\mu) \ar[d] \\
& \Hfunc(z) = \Ffunc (z)  &  \Hfunc(\mu+\ee) = \Gfunc(p+\mu+\ee). \ar[l]
}
\end{equation*}
Here the value of $\Hfunc$ is given on each value in $T$ and morphisms between adjacent objects are the connecting
morphisms in the above commutative diagram.
We have the following equalities:
\begin{align*}
\big[ \image\; \Ffunc ( p + \mu <\infty) \big] &= d \Hfunc \big( [\mu, \infty) \big) \\
\big[ \image\; \Ffunc ( p - \mu < \infty) \big] &= d \Hfunc \big( [-\mu, \infty) \big) \\
\big[ \image\; \Gfunc ( p+\mu+\ee < \infty) \big] &= d \Hfunc \big( [\mu+\ee, \infty) \big) \\ 
\big[ \image\; \Gfunc ( p-\mu-\ee < \infty) \big] &= d \Hfunc \big( [-\mu-\ee, \infty) \big).
\end{align*}
By Lemma \ref{lem:corners} along with the above substitutions, we have
$$\sum_{J \in \square_{\mu}[0,\infty)}  \tilde \Hfunc (J) = \sum_{J \in \square_{\mu} I} \tilde{\Ffunc}(J)$$
$$\sum_{J \in \square_{\mu+\ee}[0,\infty)} \tilde \Hfunc (J) = \sum_{J \in \square_{\mu+\ee} I } \tilde{G}(J).$$ 
By the inclusion $\square_{\mu} [0,\infty) \subseteq \square_{\mu+\ee} [0,\infty) $ along with 
Proposition \ref{prop:positivity}, we have 
$$\sum_{J \in \square_{\mu} [0,\infty)} \tilde \Hfunc (J) \preceq 
\sum_{J \in \square_{\mu+\ee} [0,\infty)} \tilde \Hfunc (J).$$
This proves the statement.

%
%
\end{proof}

\begin{defn}
The \define{injectivity radius}
of a finite set $S = \{ s_1 < s_2 < \cdots < s_k < \infty\}$ is
$$\rho := \min_{1 < i \leq k} \frac{s_{i}-s_{i-1}}{2}.$$
Note that if a persistence module $\Ffunc$ is $S$-constructible and $I \in \Dgm(S)$, then
	$$\tilde \Ffunc (I) = \sum_{J \in \square_\rho I} \tilde \Ffunc (J).$$
\end{defn}

\begin{lem}[Easy Bijection]
\label{lem:bijection}
Let $\Ffunc$ be an $S$-constructible persistence module and $\rho > 0$ the injectivity 
radius of $S$.
If $\Gfunc$ is a second constructible persistence module such that $d_I(\Ffunc,\Gfunc) < \rho/2$, then 
$d_B(\tilde \Ffunc, \tilde \Gfunc) \leq d_I(\Ffunc,\Gfunc)$.
\end{lem}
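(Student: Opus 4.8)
The plan is to use the Box Lemma (Lemma~\ref{lem:box}) together with the injectivity radius property to build the matching $\gamma$ corner-by-corner. Write $\ee := d_I(\Ffunc, \Gfunc) < \rho/2$ and fix an $\ee$-interleaving between $\Ffunc$ and $\Gfunc$. Since $\Ffunc$ is $S$-constructible, its diagram $\tilde\Ffunc$ is supported on $\Dgm(S)$, and for each $I \in \Dgm(S)$ the identity $\tilde\Ffunc(I) = \sum_{J \in \square_\rho I} \tilde\Ffunc(J)$ from the definition of the injectivity radius holds. The key point is that the boxes $\{\square_{\rho}I : I \in \Dgm(S)\}$ are pairwise disjoint (each grid point $I'$ of $\Dgm(S)$ lies in $\square_\rho I$ only for $I = I'$, and off-grid points can only lie in one such box as well because consecutive grid values are more than $2\rho$ apart), and moreover every $I' \in \Dgm$ with $\tilde\Gfunc(I') \neq 0$ lies within distance $\ee < \rho/2$ of some grid point, hence lies in a unique $\square_{\ee}I$ with $I \in \Dgm(S)$. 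This disjointness is what turns the inequality of the Box Lemma into an exact bookkeeping device.

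First I would record the two consequences of the Box Lemma applied with $\mu$ running over appropriate small values. Applying Lemma~\ref{lem:box} to the pair $(\Ffunc, \Gfunc)$ with box radius $\rho - \ee$ (note $\rho - \ee > \rho/2 > \ee$, so $\square_{(\rho-\ee)+\ee}I = \square_\rho I$ is nonempty whenever $\square_{\rho-\ee}I$ is), we get $\sum_{J \in \square_{\rho - \ee}I}\tilde\Ffunc(J) \preceq \sum_{J \in \square_{\rho}I}\tilde\Gfunc(J)$ — wait, the Box Lemma compares an $\Ffunc$-box to a $\Gfunc$-box, so it reads $\sum_{J \in \square_{\rho-\ee}I}\tilde\Ffunc(J) \preceq \sum_{J \in \square_{\rho}I}\tilde\Gfunc(J)$. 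But $\square_{\rho-\ee}I \supseteq \{I\}$ (since $\rho - \ee > 0$) and in fact by injectivity radius $\sum_{J \in \square_{\rho-\ee}I}\tilde\Ffunc(J) = \tilde\Ffunc(I)$ because all grid support of $\tilde\Ffunc$ inside $\square_\rho I$ already collapses to $I$ and $\rho - \ee$ still captures it — more carefully, since $\tilde\Ffunc$ is supported on $\Dgm(S)$ and the only grid point in $\square_\rho I$ is $I$ itself, and $\square_{\rho-\ee}I$ still contains $I$, we have $\sum_{J \in \square_{\rho-\ee}I}\tilde\Ffunc(J) = \tilde\Ffunc(I)$. Symmetrically, applying the Box Lemma to $(\Gfunc, \Ffunc)$ gives $\sum_{J \in \square_{\mu}I}\tilde\Gfunc(J) \preceq \sum_{J \in \square_{\mu+\ee}I}\tilde\Ffunc(J)$ for the relevant $\mu$; taking $\mu = \ee$ and using that $\square_{2\ee}I \subseteq \square_\rho I$ contains only the grid point $I$ from $\Ffunc$'s support yields $\sum_{J \in \square_\ee I}\tilde\Gfunc(J) \preceq \tilde\Ffunc(I)$. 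Combining, $\tilde\Ffunc(I) = \sum_{J \in \square_\ee I}\tilde\Gfunc(J)$: the $\Gfunc$-mass in the $\ee$-box around each grid point $I$ exactly equals $\tilde\Ffunc(I)$. (Here I am using antisymmetry of $\preceq$ on the Grothendieck group, which follows from Positivity, Proposition~\ref{prop:positivity}, since $a \preceq b$ and $b \preceq a$ force $b - a$ and $a - b$ both $\succeq 0$.)

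Next I would assemble the matching. Define $\gamma(I, J) := \tilde\Gfunc(J)$ if $I \in \Dgm(S)$ and $J \in \square_\ee I$, and $\gamma(I,J) := 0$ otherwise; symmetrically one checks this also accounts correctly for $I \in \Dgm \setminus \Delta$ and for $J$. The constraint $\sum_J \gamma(I,J) = \tilde\Ffunc(I)$ for $I \notin \Delta$ is exactly the corner identity just proved (and for $I \notin \Dgm(S)$ both sides vanish). For the constraint $\sum_I \gamma(I,J) = \tilde\Gfunc(J)$: every $J$ with $\tilde\Gfunc(J) \neq 0$ lies in $\square_\ee I$ for exactly one grid point $I$ — existence because $d_I(\Gfunc,\Ffunc) < \rho/2$ forces $\Gfunc$ to be $S'$-constructible for an $S'$ within $\rho/2$ of $S$, hence its diagram support is within $\ee$ of grid points; uniqueness because the $\ee$-boxes around distinct grid points are disjoint as the grid spacing exceeds $2\rho > 4\ee$. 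So the sum over $I$ telescopes to the single term $\tilde\Gfunc(J)$. Thus $\gamma$ is a genuine matching, and $\|\gamma\| \leq \ee$ since $\gamma(I,J) \neq 0$ only when $J \in \square_\ee I$, i.e. the endpoints of $I$ and $J$ differ by at most $\ee$ (with the convention that boxes around $[p,\infty)$ only contain intervals $[r,\infty)$, so infinite death times are matched to infinite death times and never contribute an infinite coordinate difference). Therefore $d_B(\tilde\Ffunc, \tilde\Gfunc) \leq \|\gamma\| \leq \ee = d_I(\Ffunc,\Gfunc)$.

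The main obstacle I anticipate is the careful verification that the $\ee$-boxes and $\rho$-boxes around grid points are disjoint and that they jointly cover the supports of $\tilde\Ffunc$ and $\tilde\Gfunc$ — in particular pinning down that $\Gfunc$'s constructibility set lies within $\ee$ of $S$, which requires unwinding the interleaving to see that an $\ee$-interleaving of $\Ffunc$ with $\Gfunc$ forces the "jump points" of $\Gfunc$ to sit near those of $\Ffunc$ once $\ee < \rho/2$. Everything else is the Box Lemma plus antisymmetry of $\preceq$ (Positivity) plus elementary telescoping of M\"obius sums.
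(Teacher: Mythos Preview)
Your sandwich does not close. From the Box Lemma you obtain
\[
\sum_{J \in \square_\ee I}\tilde\Gfunc(J) \preceq \tilde\Ffunc(I) \preceq \sum_{J \in \square_\rho I}\tilde\Gfunc(J),
\]
and from this pair of inequalities you cannot conclude $\tilde\Ffunc(I) = \sum_{J \in \square_\ee I}\tilde\Gfunc(J)$; the two outer sums are taken over different boxes. The paper's sandwich is different: it chooses a small $\mu>0$ with $\mu+2\ee<\rho$ and applies the Box Lemma twice in succession to get
\[
\tilde\Ffunc(I)=\sum_{J\in\square_\mu I}\tilde\Ffunc(J)\ \preceq\ \sum_{J\in\square_{\mu+\ee}I}\tilde\Gfunc(J)\ \preceq\ \sum_{J\in\square_{\mu+2\ee}I}\tilde\Ffunc(J)=\tilde\Ffunc(I),
\]
so the \emph{middle} term equals $\tilde\Ffunc(I)$ and one sets $\gamma_\mu(I,J):=\tilde\Gfunc(J)$ for $J\in\square_{\mu+\ee}I$. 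The norm bound is then $\mu+\ee$, and letting $\mu\to 0$ yields $d_B\le\ee$. Your choice of radii $\rho-\ee$ and $\ee$ never produces two equal endpoints to squeeze between.

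The second gap is more serious and is precisely the obstacle you flag at the end: it is \emph{false} that every $J$ with $\tilde\Gfunc(J)\neq 0$ lies in $\square_\ee I$ for some grid point $I\in\Dgm(S)$. An $\ee$-interleaving does not force the constructibility set of $\Gfunc$ to lie near $S$; $\Gfunc$ may have arbitrarily many short bars (length $\le 2\ee$) near the diagonal and far from every grid point. For instance, if $\Ffunc$ is a single interval module on $[0,10)$ (so $S=\{0,10,\infty\}$, $\rho=5$) and $\Gfunc=\Ffunc\oplus[5,5+2\ee)$, then the support point $[5,5+2\ee)$ of $\tilde\Gfunc$ lies in no $\square_\ee I$ for $I\in\Dgm(S)$. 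The paper handles exactly this by a case split: for $J=[p,q)$ with $q-p\le \ee$ one matches $J$ to the diagonal point $[\tfrac{q-p}{2},\tfrac{q-p}{2})$, while for $q-p>\ee$ one uses the Box Lemma centered at $J$ (not at a grid point) to see that $J$ is captured by some $\gamma_\mu(I,J)$. Your argument needs both of these repairs.
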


\begin{proof}
Let $\ee = d_I (\Ffunc, \Gfunc)$.
Choose a sufficiently small $\mu > 0$ such that $\mu + 2 \ee < \rho$.
We construct a matching $\gamma_\mu : \Dgm \times \Dgm \to \Ggroup(\Ccat)$ such that
	\begin{align}
	\tilde \Ffunc(I) & = \sum_{J \in \Dgm} \gamma_\mu(I,J) \textrm{ for all }I \in \Dgm \backslash \Delta 
	\label{eq:forward} \\
	\tilde \Gfunc(J) & = \sum_{I \in \Dgm} \gamma_\mu(I,J) \textrm{ for all } J \in \Dgm \backslash \Delta 
	\label{eq:backward} .
	\end{align}

Fix an  $I \in \Dgm(S)$. 
By Lemma \ref{lem:box}, 
$$\tilde \Ffunc(I) = \sum_{J \in \square_{\mu} I} \tilde \Ffunc (I) \leq \sum_{J \in \square_{\mu + \ee} I} \tilde \Gfunc(J) \leq \sum_{J \in \square_{\mu + 2\ee} I}\tilde \Ffunc(J) = \tilde \Ffunc(I).$$ 
Let $\gamma_\mu(I,J) := \tilde \Gfunc(J)$ for all $J \in \square_{\mu + \ee}(I)$.
Repeat for all $I \in \Dgm(S)$.
Equation \ref{eq:forward} is satisfied.
We now check that $\gamma_\mu$ satisfies Equation \ref{eq:backward}.
Fix an interval $J = [p,q)$ and suppose $\tilde \Gfunc (J) \neq 0$.
If $q-p > \ee$, then by Lemma \ref{lem:box}
	$$\tilde \Gfunc(J) = \sum_{I \in \square_{\mu} J} \tilde \Gfunc (I) \preceq \sum_{I \in \square_{\mu+\ee} J} 
	\tilde \Ffunc(J).$$
This means $\gamma_\mu(I, J) \neq 0$ for some $I \in \square_{\mu+\ee} J$.
If $q-p \leq \ee$, then we match $J$ to the diagonal. 
That is, we let $\gamma_\mu \big ( [\sfrac{q-p}{2}, \sfrac{q-p}{2} ), J \big) := \tilde \Gfunc(J)$.

By construction, $||\gamma_\mu|| \leq \mu + \ee $ for all $\mu > 0$ sufficiently small. Therefore $d_B ( \tilde \Ffunc, \tilde \Gfunc ) \leq \ee = d_I(\tilde \Ffunc, \tilde \Gfunc)$. 
\end{proof}

We are now ready to prove our main result.

\begin{thm}[Bottleneck Stability]
Let $\mathcal{C}$ be a skeletally small abelian category and $\Ffunc, \Gfunc:  \Rspace \to \mathcal{C}$ 
two constructible persistence modules.
Then $d_B \big( \tilde \Ffunc, \tilde \Gfunc \big) \leq d_I (\Ffunc, \Gfunc)$
where $\tilde \Ffunc$ and $\tilde \Gfunc$ are the persistence diagrams of $\Ffunc$ and $\Gfunc$,
respectively.
\end{thm}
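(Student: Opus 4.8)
The plan is to reduce the general case to the situation already handled by the Easy Bijection (Lemma \ref{lem:bijection}), which requires the interleaving parameter to be small relative to the injectivity radius of a constructibility set. The obstacle is that $\Ffunc$ and $\Gfunc$ may be $\ee$-interleaved for an $\ee$ much larger than $\rho/2$ for any natural set $S$; the standard fix, following \cite{CSEdH} and \cite{crazy_persistence}, is to interpolate and subdivide. So the first step is to invoke Proposition \ref{prop:interpolation}: with $\ee := d_I(\Ffunc, \Gfunc)$, obtain a family $\{\Kfunc_t\}_{t \in [0,1]}$ of constructible persistence modules with $\Kfunc_0 \cong \Ffunc$, $\Kfunc_1 \cong \Gfunc$, and $d_I(\Kfunc_s, \Kfunc_t) \leq \ee|s-t|$.

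Next I would choose a partition $0 = t_0 < t_1 < \cdots < t_N = 1$ of $[0,1]$ fine enough that consecutive modules are close in the sense demanded by Lemma \ref{lem:bijection}. Concretely, for each $i$ let $\rho_i$ be the injectivity radius of a set $S_i$ with respect to which $\Kfunc_{t_i}$ is constructible; choosing the mesh of the partition smaller than $\rho_i/(2\ee)$ at each step guarantees $d_I(\Kfunc_{t_i}, \Kfunc_{t_{i+1}}) \leq \ee(t_{i+1}-t_i) < \rho_i/2$. A mild subtlety is that $\rho_i$ depends on $t_i$, so one should pick the $t_i$ one at a time (or take a common refinement and a uniform lower bound on the finitely many radii that occur) — but since only finitely many steps are needed and each $\rho_i > 0$, such a partition exists. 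Then Lemma \ref{lem:bijection} applies to each consecutive pair, giving $d_B(\tilde\Kfunc_{t_i}, \tilde\Kfunc_{t_{i+1}}) \leq d_I(\Kfunc_{t_i}, \Kfunc_{t_{i+1}}) \leq \ee(t_{i+1}-t_i)$.

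Finally I would assemble the pieces using the triangle inequality for the bottleneck distance: $d_B$ is a genuine (extended) metric on persistence diagrams — a matching between $Y_1, Y_2$ and one between $Y_2, Y_3$ compose to a matching between $Y_1, Y_3$ whose norm is at most the sum of the norms (matchings through the diagonal are handled as usual) — so
$$
d_B(\tilde\Ffunc, \tilde\Gfunc) \;=\; d_B(\tilde\Kfunc_{t_0}, \tilde\Kfunc_{t_N}) \;\leq\; \sum_{i=0}^{N-1} d_B(\tilde\Kfunc_{t_i}, \tilde\Kfunc_{t_{i+1}}) \;\leq\; \sum_{i=0}^{N-1} \ee(t_{i+1}-t_i) \;=\; \ee \;=\; d_I(\Ffunc, \Gfunc).
$$
The main obstacle I anticipate is purely bookkeeping: verifying that the bottleneck distance satisfies the triangle inequality in this Grothendieck-group-valued setting (one must check that composed matchings genuinely satisfy the marginal conditions of the matching definition, and track contributions routed through $\Delta$), and confirming that the partition can be chosen uniformly despite the $t$-dependence of the constructibility sets. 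Neither step is deep, but both need to be stated carefully since the positivity of the diagrams (Proposition \ref{prop:positivity}) — already used inside Lemma \ref{lem:box} — is what makes the inequalities in Lemma \ref{lem:bijection} collapse to equalities, and one wants to be sure nothing in the composition step requires more than what positivity provides.
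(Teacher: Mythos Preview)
Your proposal follows the paper's strategy essentially verbatim: interpolate via Proposition \ref{prop:interpolation}, partition $[0,1]$ finely enough to apply Lemma \ref{lem:bijection} on each subinterval, and sum using the triangle inequality for $d_B$. The one place where the paper is more careful is the existence of the finite partition. Your suggestion to ``pick the $t_i$ one at a time'' is not guaranteed to terminate: nothing in the construction of $\{\Kfunc_t\}$ furnishes a uniform positive lower bound on the injectivity radii $\rho_t$, so a greedy walk with step size $<\rho_{t_i}/(2\ee)$ could accumulate at some interior point and never reach $1$; and your alternative (``uniform lower bound on the finitely many radii that occur'') presupposes the finite partition you are trying to produce. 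The paper resolves this with a compactness argument: cover $[0,1]$ by the open sets $U(t)=(t-\rho_t/(4\ee),\,t+\rho_t/(4\ee))\cap[0,1]$, extract a finite minimal subcover indexed by $0=t_0<\cdots<t_n=1$, and note that minimality forces consecutive $U(t_i)$, $U(t_{i+1})$ to overlap, whence $t_{i+1}-t_i\leq\tfrac{1}{2\ee}\max\{\rho_{t_i},\rho_{t_{i+1}}\}$ and Lemma \ref{lem:bijection} applies to each pair. With that fix your argument is exactly the paper's.
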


\begin{proof}
Let $\ee = d_I(\Ffunc, \Gfunc)$. 
By Proposition \ref{prop:interpolation}, there is a one parameter family of constructible persistence modules 
$\{\Kfunc_t\}_{t \in [0,1]}$ such that
$d_I(\Kfunc_t, \Kfunc_s) \leq \ee |t-s|$, $\Kfunc_0 \cong \Ffunc$, and $\Kfunc_1 \cong \Gfunc$. 
Each $\Kfunc_t$ is constructible with respect to some set $S_t$, and each set $S_t$ has
an injectivity radius $\rho_t > 0$.
For each time $t \in [0,1]$, consider the open interval
$$U(t) = ( t-\sfrac{\rho_t}{4\ee},t+\sfrac{\rho_t}{4\ee} ) \cap [0,1]$$
By compactness of $[0,1]$, there is a finite set
$Q = \{ 0 = t_0 < t_1 < \cdots < t_n = 1 \}$
such that $\cup_{i=0}^n U(t_i) = [0,1]$.
We assume that $Q$ is minimal, that is, there does not exists a pair $t_i, t_j \in Q$ such that
$U(t_i) \subseteq U(t_j)$.
If this is not the case, simply throw away $U(t_i)$ and we still have a covering of $[0,1]$.
As a consequence, for any consecutive pair $t_i < t_{i+1}$, we have $U(t_i) \cap U(t_{i+1}) \neq \emptyset$.
This means
$$t_{i+1} - t_i \leq \frac{1}{4\ee} (\rho_{t_{i+1}}+\rho_{t_i}) \leq \frac{1}{2\ee}
\max\{\rho_{t_{i+1}},\rho_{t_i}\}$$
and therefore $d_I( \Kfunc_{t_{i}}, \Kfunc_{t_{i+1}}) \leq \frac{1}{2} \max \{\rho_{t_{i}},\rho_{t_{i+1}} \}$. 
By Lemma \ref{lem:bijection}, 
$$ d_B \big( \tilde \Kfunc_{t_{i}},\tilde \Kfunc_{t_{i+1}} \big) \leq 
d_I \big( \Kfunc_{t_{i}}, \Kfunc_{t_{i+1}}),$$ 
for all $1\leq i \leq n-1$. 
Therefore 
$$d_B( \tilde \Ffunc, \tilde \Gfunc \big) \leq 
\sum_{i=0}^{n-1} d_B \big( \tilde \Kfunc_{t_{i}},\tilde \Kfunc_{t_{i+1}} \big) \leq
\sum_{i=0}^{n-1} d_I \big( \Kfunc_{t_{i}}, \Kfunc_{t_{i+1}}) \leq \ee.$$  
\end{proof}

\newpage 

{\bibliography{ref.bib}}

\newcommand{\etalchar}[1]{$^{#1}$}
\begin{thebibliography}{CCSG{\etalchar{+}}09}

\bibitem[CB15]{Boevey}
William Crawley-Boevey.
\newblock Decomposition of pointwise finite-dimensional persistence modules.
\newblock {\em Journal of Algebra and Its Applications}, 14(5), 2015.

\bibitem[CCSG{\etalchar{+}}09]{proximity}
Fr{\'e}d{\'e}ric Chazal, David Cohen-Steiner, Marc Glisse, Leonidas Guibas, and
  Steve Oudot.
\newblock Proximity of persistence modules and their diagrams.
\newblock In {\em Proceedings of the Twenty-fifth Annual Symposium on
  Computational Geometry}, SCG '09, pages 237--246, New York, NY, USA, 2009.
  ACM.

\bibitem[CdS10]{Carlsson2010}
Gunnar Carlsson and Vin de~Silva.
\newblock Zigzag persistence.
\newblock {\em Foundations of Computational Mathematics}, 10(4):367--405, 2010.

\bibitem[CdSGO16]{crazy_persistence}
Fr{\'e}d{\'e}ric Chazal, Vin de~Silva, Marc Glisse, and Steve Oudot.
\newblock {\em The structure and stability of persistence modules}.
\newblock Springer International Publishing, 2016.

\bibitem[CSEH07]{CSEdH}
David Cohen-Steiner, Herbert Edelsbrunner, and John Harer.
\newblock Stability of persistence diagrams.
\newblock {\em Discrete \& Computational Geometry}, 37(1):103--120, 2007.

\bibitem[CZ09]{Carlsson2009}
Gunnar Carlsson and Afra Zomorodian.
\newblock The theory of multidimensional persistence.
\newblock {\em Discrete {\&} Computational Geometry}, 42(1):71--93, Jul 2009.

\bibitem[Les15]{Lesnick2015}
Michael Lesnick.
\newblock The theory of the interleaving distance on multidimensional
  persistence modules.
\newblock {\em Foundations of Computational Mathematics}, 15(3):613--650, Jun
  2015.

\bibitem[Pat18]{patel}
Amit Patel.
\newblock Generalized persistence diagrams.
\newblock {\em Journal of Applied and Computational Topology}, May 2018.

\bibitem[ZC05]{ZC2005}
Afra Zomorodian and Gunnar Carlsson.
\newblock Computing persistent homology.
\newblock {\em Discrete \& Computational Geometry}, 33(2):249--274, 2005.

\end{thebibliography}
\bibliographystyle{alpha}

\end{document}